
\documentclass{article}
\usepackage{amssymb}
\usepackage{amsfonts}
\usepackage{amsmath}

\setcounter{MaxMatrixCols}{10}

\newtheorem{theorem}{Theorem}

\newenvironment{proof}[1][Proof]{\noindent\textbf{#1.} }{\ \rule{0.5em}{0.5em}}
\input{tcilatex}
\begin{document}

\title{Sampling Theorem Associated with \\
q-Dirac System}
\author{Fatma H\i ra \\
Hitit University, Arts and Science Faculty, \\
Department of Mathematics, 19030, \c{C}orum,Turkey }
\maketitle

\begin{abstract}
This paper deals with $q-$analogue of sampling theory associated with $q-$%
Dirac system. We derive sampling representation for transform whose kernel
is a solution of this $q-$Dirac system. As a special case, three examples
are given.
\end{abstract}

\section{Introduction}

Consider the following $q-$Dirac system%
\begin{equation}
\left\{ 
\begin{array}{l}
-\dfrac{1}{q}D_{q^{-1}}y_{2}+p\left( x\right) y_{1}=\lambda y_{1}, \\ 
D_{q}y_{1}+r\left( x\right) y_{2}=\lambda y_{2},%
\end{array}%
\right.  \tag{1.1}
\end{equation}%
\begin{equation}
k_{11}y_{1}\left( 0\right) +k_{12}y_{2}\left( 0\right) =0,  \tag{1.2}
\end{equation}%
\begin{equation}
k_{21}y_{1}\left( a\right) +k_{22}y_{2}\left( aq^{-1}\right) =0,  \tag{1.3}
\end{equation}%
where $k_{ij}~\left( i,j=1,2\right) $ are real numbers, $\lambda $ is a
complex eigenvalue parameter, $y\left( x\right) =\left( 
\begin{array}{c}
y_{1}\left( x\right) \\ 
y_{2}\left( x\right)%
\end{array}%
\right) ,$ $p\left( x\right) $ and $r\left( x\right) $ are real-valued
functions defined on $\left[ 0,a\right] $ and continuous at zero and $%
p\left( x\right) $, $r\left( x\right) \in L_{q}^{1}\left( 0,a\right) ~\left( 
\text{see }\left[ 1,2\right] \right) .$

The papers in $q-$Dirac system are few, see $\left[ 1-3\right] .$ However
sampling theories associated with $q-$Dirac system do not exist as far as we
know. So that we will construct a $q-$analogue of sampling theorem for $q-$%
Dirac system (1.1)-(1.3), building on recent results in $\left[ 1,2\right] .$
To achieve our aim we will briefly give the spectral analysis of the problem
(1.1)-(1.3). Then we derive sampling theorem using solution. In the last
section we give three examples illustrating the obtained results.

\section{Notations and Preliminaries}

We state the $q-$notations and results which will be needed for the
derivation of the sampling theorem. Throughout this paper $q$ is a positive
number with $0<q<1$.

$A$ set $A\subseteq 
\mathbb{R}
$ is called $q$-geometric if, for every $x\in A,~qx\in A$. Let $f$ be a real
or complex-valued function defined on a $q$-geometric set $A$. The $q$%
-difference operator is defined by%
\begin{equation}
D_{q}f\left( x\right) :=\frac{f\left( x\right) -f\left( qx\right) }{x\left(
1-q\right) },~x\neq 0.  \tag{2.1}
\end{equation}%
If $0\in A$, the $q$-derivative at zero is defined to be%
\begin{equation}
D_{q}f\left( 0\right) :=\lim_{n\rightarrow \infty }\frac{f\left(
xq^{n}\right) -f\left( 0\right) }{xq^{n}},~x\in A,  \tag{2.2}
\end{equation}%
if the limit exists and does not depend on $x$. Also, for $x\in A$, $%
D_{q^{-1}}$ is defined to be%
\begin{equation}
D_{q^{-1}}f\left( x\right) :=\left\{ 
\begin{array}{l}
\dfrac{f\left( x\right) -f\left( q^{-1}x\right) }{x\left( 1-q^{-1}\right) }%
,~x\in A\setminus \left\{ 0\right\} , \\ 
D_{q}f\left( 0\right) ,~\text{\ \ \ \ \ \ \ \ \ \ \ \ }x=0,%
\end{array}%
\right.  \tag{2.3}
\end{equation}%
provided that $D_{q}f\left( 0\right) $ exists. The following relation can be
verified directly from the definition%
\begin{equation}
D_{q^{-1}}f\left( x\right) =\left( D_{q}f\right) \left( xq^{-1}\right) . 
\tag{2.4}
\end{equation}%
A right inverse, $q$-integration, of the $q$-difference operator $D_{q}$ is
defined by Jackson $\left[ 4\right] $ as%
\begin{equation}
\int\limits_{0}^{x}f\left( t\right) d_{q}t:=x\left( 1-q\right)
\sum\limits_{n=0}^{\infty }q^{n}f\left( xq^{n}\right) ,~x\in A,  \tag{2.5}
\end{equation}%
provided that the series converges. A $q$-analog of the fundamental theorem
of calculus is given by%
\begin{equation}
D_{q}\int\limits_{0}^{x}f\left( t\right) d_{q}t=f\left( x\right) ,\text{ }%
\int\limits_{0}^{x}D_{q}f\left( t\right) d_{q}t=f\left( x\right)
-\lim_{n\rightarrow \infty }f\left( xq^{n}\right) ,\text{\ }  \tag{2.6}
\end{equation}%
where $\underset{n\rightarrow \infty }{\lim }f\left( xq^{n}\right) $ can be
replaced by $f\left( 0\right) $ if $f$ is $q$-regular at zero, that is, if $%
\underset{n\rightarrow \infty }{\lim }f\left( xq^{n}\right) =f\left(
0\right) $, for all $x\in A.$ Throughout this paper, we deal only with
functions $q$-regular at zero.

The $q$-type product formula is given by%
\begin{equation}
D_{q}\left( fg\right) \left( x\right) =g\left( x\right) D_{q}f\left(
x\right) +f\left( qx\right) D_{q}g\left( x\right) ,  \tag{2.7}
\end{equation}%
and hence the $q$-integration by parts is given by%
\begin{equation}
\int\limits_{0}^{a}g\left( x\right) D_{q}f\left( x\right) d_{q}x=\left(
fg\right) \left( a\right) -\left( fg\right) \left( 0\right)
-\int\limits_{0}^{a}D_{q}g\left( x\right) f\left( qx\right) d_{q}x, 
\tag{2.8}
\end{equation}%
where $f$ and $g$ are $q$-regular at zero.

For more results and properties in $q$-calculus, readers are referred to the
recent works $\left[ 5-8\right] $.

The basic trigonometric functions $\cos \left( z;q\right) $ and $\sin \left(
z;q\right) $ are defined on $%
\mathbb{C}
$ by%
\begin{equation}
\cos \left( z;q\right) :=\sum\limits_{n=0}^{\infty }\frac{\left( -1\right)
^{n}q^{n^{2}}\left( z\left( 1-q\right) \right) ^{2n}}{\left( q;q\right) _{2n}%
},  \tag{2.9}
\end{equation}%
\begin{equation}
\sin \left( z;q\right) :=\sum\limits_{n=0}^{\infty }\frac{\left( -1\right)
^{n}q^{n\left( n+1\right) }\left( z\left( 1-q\right) \right) ^{2n+1}}{\left(
q;q\right) _{2n+1}},  \tag{2.10}
\end{equation}%
and they are $q$-analogs of the cosine and sine functions. $\cos \left(
.;q\right) $ and $\sin \left( .;q\right) $ have only real and simple zeros $%
\left\{ \pm x_{m}\right\} _{m=1}^{\infty }$ and $\left\{ 0,\pm y_{m}\right\}
_{m=1}^{\infty },$respectively, where $x_{m},~y_{m}>0,m\geqslant 1$ and 
\begin{equation}
x_{m}=\left( 1-q\right) ^{-1}q^{-m+1/2+\varepsilon _{m}\left( 1/2\right) }%
\text{ if }q^{3}<\left( 1-q^{2}\right) ^{2},  \tag{2.11}
\end{equation}%
\begin{equation}
y_{m}=\left( 1-q\right) ^{-1}q^{-m+\varepsilon _{m}\left( -1/2\right) }\text{
if }q<\left( 1-q^{2}\right) ^{2}.  \tag{2.12}
\end{equation}%
Moreover, for any $q\in \left( 0,1\right) ,$ (2.11) and (2.12) hold for
sufficiently large $m,$ cf. $\left[ 5,9-11\right] .$

Let $L_{q}^{2}\left( 0,a\right) $ be the space of all complex valued
functions defined on $\left[ 0,a\right] $ such that%
\begin{equation}
\left\Vert f\right\Vert :=\left( \int\limits_{0}^{a}\left\vert f\left(
x\right) \right\vert ^{2}d_{q}x\right) ^{1\backslash 2}<\infty .  \tag{2.13}
\end{equation}%
The space $L_{q}^{2}\left( 0,a\right) $ is a separable Hilbert space with
the inner product $\left( \text{see }\left[ 12\right] \right) $%
\begin{equation}
\left\langle f,g\right\rangle :=\int\limits_{0}^{a}f\left( x\right) 
\overline{g\left( x\right) }d_{q}x,~f,g\in L_{q}^{2}\left( 0,a\right) . 
\tag{2.14}
\end{equation}

Let $H_{q}$ be the Hilbert space 
\begin{equation*}
H_{q}:=\left\{ y\left( x\right) =\left( 
\begin{array}{c}
y_{1}\left( x\right) \\ 
y_{2}\left( x\right)%
\end{array}%
\right) ,~y_{1}\left( x\right) ,y_{2}\left( x\right) \in L_{q}^{2}\left(
0,a\right) \right\} .
\end{equation*}%
The inner product of $H_{q}$ is defined by%
\begin{equation}
\left\langle y\left( .\right) ,z\left( .\right) \right\rangle
_{H_{q}}:=\int\limits_{0}^{a}y^{\top }\left( x\right) z\left( x\right)
d_{q}x,  \tag{2.15}
\end{equation}%
where $\top $ denotes the matrix transpose, $y\left( x\right) =\left( 
\begin{array}{c}
y_{1}\left( x\right) \\ 
y_{2}\left( x\right)%
\end{array}%
\right) ,$ $z\left( x\right) =\left( 
\begin{array}{c}
z_{1}\left( x\right) \\ 
z_{2}\left( x\right)%
\end{array}%
\right) \in H_{q},~y_{i}\left( .\right) ,~z_{i}\left( .\right) \in
L_{q}^{2}\left( 0,a\right) $ $\left( i=1,2\right) .$

It is known $\left[ 2\right] $ that the problem (1.1)-(1.3) has a countable
number of eigenvalues $\left\{ \lambda _{n}\right\} _{n=-\infty }^{\infty }$
which are real and simple, and to every eigenvalue $\lambda _{n},$ there
corresponds a vector-valued eigenfunction $y_{n}^{\top }\left( x,\lambda
_{n}\right) =\left( y_{n,1}\left( x,\lambda _{n}\right) ,y_{n,2}\left(
x,\lambda _{n}\right) \right) .$ Moreover, vector-valued eigenfunctions
belonging to different eigenvalues are orthogonal, i.e.,%
\begin{equation*}
\begin{array}{l}
\int\limits_{0}^{a}y_{n}^{\top }\left( x,\lambda _{n}\right) y_{m}\left(
x,\lambda _{m}\right) d_{q}x \\ 
=\int\limits_{0}^{a}\left\{ y_{n,1}\left( x,\lambda _{n}\right)
y_{m,1}\left( x,\lambda _{m}\right) +y_{n,2}\left( x,\lambda _{n}\right)
y_{m,2}\left( x,\lambda _{m}\right) \right\} d_{q}x=0,~\ \text{for }\lambda
_{n}\neq \lambda _{m}.%
\end{array}%
\end{equation*}

Let $y_{1}\left( x,\lambda _{1}\right) =\left( 
\begin{array}{c}
y_{11}\left( x,\lambda _{1}\right) \\ 
y_{12}\left( x,\lambda _{1}\right)%
\end{array}%
\right) $ and $y_{2}\left( x,\lambda _{2}\right) =\left( 
\begin{array}{c}
y_{21}\left( x,\lambda _{2}\right) \\ 
y_{22}\left( x,\lambda _{2}\right)%
\end{array}%
\right) $ be solutions of (1.1): hence%
\begin{equation}
\left\{ 
\begin{array}{l}
-\frac{1}{q}D_{q^{-1}}y_{12}+\left\{ p\left( x\right) -\lambda _{1}\right\}
y_{11}=0, \\ 
D_{q}y_{11}+\left\{ r\left( x\right) -\lambda _{1}\right\} y_{12}=0,%
\end{array}%
\right.  \tag{2.16}
\end{equation}%
and%
\begin{equation}
\left\{ 
\begin{array}{l}
-\frac{1}{q}D_{q^{-1}}y_{22}+\left\{ p\left( x\right) -\lambda _{2}\right\}
y_{21}=0, \\ 
D_{q}y_{21}+\left\{ r\left( x\right) -\lambda _{2}\right\} y_{22}=0.%
\end{array}%
\right.  \tag{2.17}
\end{equation}

Multiplying (2.16) by $y_{21}$ and $y_{22}$ and (2.17) by $-y_{11}$ and $%
-y_{22}$ respectively, and adding them together also using the formula (2.4)
we obtain%
\begin{equation}
\begin{array}{l}
D_{q}\left\{ y_{11}\left( x,\lambda _{1}\right) y_{22}\left( xq^{-1},\lambda
_{2}\right) -y_{12}\left( xq^{-1},\lambda _{1}\right) y_{21}\left( x,\lambda
_{2}\right) \right\} \\ 
=\left( \lambda _{1}-\lambda _{2}\right) \left\{ y_{11}\left( x,\lambda
_{1}\right) y_{21}\left( x,\lambda _{2}\right) +y_{12}\left( x,\lambda
_{1}\right) y_{22}\left( x,\lambda _{2}\right) \right\} .%
\end{array}
\tag{2.18}
\end{equation}

Let $y\left( x\right) =\left( 
\begin{array}{c}
y_{1}\left( x\right) \\ 
y_{2}\left( x\right)%
\end{array}%
\right) ,~z\left( x\right) =\left( 
\begin{array}{c}
z_{1}\left( x\right) \\ 
z_{2}\left( x\right)%
\end{array}%
\right) \in H_{q}.$Then the Wronskian of $y\left( x\right) $ and $z\left(
x\right) $ is defined by%
\begin{equation}
W\left( y,z\right) \left( x\right) :=y_{1}\left( x\right) z_{2}\left(
xq^{-1}\right) -z_{1}\left( x\right) y_{2}\left( xq^{-1}\right) .  \tag{2.19}
\end{equation}

Let us consider the next initial value problem

\begin{equation}
\left\{ 
\begin{array}{l}
-\frac{1}{q}D_{q^{-1}}y_{2}+p\left( x\right) y_{1}=\lambda y_{1}, \\ 
D_{q}y_{1}+r\left( x\right) y_{2}=\lambda y_{2},%
\end{array}%
\right.  \tag{2.20}
\end{equation}

\begin{equation}
y_{1}\left( 0\right) =k_{12},~\ \ y_{2}\left( 0\right) =-k_{11}.  \tag{2.21}
\end{equation}

By virtue of Theorem 1 in $\left[ 1\right] ,$ this problem has a unique
solution $\phi \left( x,\lambda \right) =\left( 
\begin{array}{c}
\phi _{1}\left( x,\lambda \right) \\ 
\phi _{2}\left( x,\lambda \right)%
\end{array}%
\right) $ . It is obvious that $\phi \left( x,\lambda \right) $ satisfies
the boundary condition (1.2) and this function is uniformly bounded on the
subsets of the form $\left[ 0,a\right] \times \Omega $ where $\Omega \subset 
\mathbb{C}
$ is compact. The proof is similar to the one in the proof of Lemma 3.1 in $%
\left[ 13\right] .$ To find the eigenvalues of the $q-$Dirac system
(1.1)-(1.3) we have to insert this function into the boundary condition
(1.3) and find the roots of the obtained equation. So, putting the function $%
\phi \left( x,\lambda \right) $ into the boundary condition (1.3) we get the
following equation whose zeros are the eigenvalues of the $q-$Dirac system
(1.1)-(1.3)%
\begin{equation}
\omega \left( \lambda \right) =k_{21}\phi _{1}\left( a,\lambda \right)
+k_{22}\phi _{2}\left( aq^{-1},\lambda \right) .  \tag{2.22}
\end{equation}

It is also known that if $\left\{ \phi _{n}\left( .\right) \right\}
_{n=-\infty }^{\infty }$ denotes a set of vector-valued eigenfunctions
corresponding $\left\{ \lambda _{n}\right\} _{n=-\infty }^{\infty },$ then $%
\left\{ \phi _{n}\left( .\right) \right\} _{n=-\infty }^{\infty }$ is a
complete orthogonal set of $H_{q.}$ For more details about how to obtain the
solutions and the eigenvalues for $q-$Dirac system see $\left[ 1,2\right] ,$
similar to the classical case of Dirac system $\left[ 14\right] $ and $q-$%
Sturm-Liouville problems $\left[ 15,16\right] .$

\section{The Sampling Theory}

The WKS (Whittaker-Kotel'nikov-Shannon) $\left[ 17-19\right] $ sampling
theorem has been generalized in many different ways. The connection between
the WKS sampling theorem and boundary value problems was first observed by
Weiss $\left[ 20\right] $ and followed by Kramer $\left[ 21\right] .$ In $%
\left[ 22\right] $, sampling theorem is introduced where sampling
representations are derived for integral transforms whose kernels are
solutions of one-dimensional regular Dirac systems. In recent years, the
connection between sampling theorems and $q-$boundary value problems has
been the focus of many research papers. In $\left[ 12,23\right] ,$ $q-$%
versions of the classical sampling theorem of WKS as well as Kramer's
analytic theorem were introduced. These results were extended to $q-$%
Sturm-Liouville problems in $\left[ 13,24\right] $, singular
q-Sturm-Liouville problem in $\left[ 25\right] $ and the $q,\omega -$%
Hahn-Sturm-Liouville problem in $\left[ 26\right] .$

In this section, we state and prove $q-$analogue of sampling theorem
associated with $q-$Dirac system (1.1)-(1.3), inspired by the classical case 
$\left[ 22\right] .$

\begin{theorem}
Let $f\left( x\right) =\left( 
\begin{array}{c}
f_{1}\left( x\right) \\ 
f_{2}\left( x\right)%
\end{array}%
\right) \in H_{q}$ and $F\left( \lambda \right) $ be the $q-$type transform%
\begin{equation}
F\left( \lambda \right) =\int\limits_{0}^{a}f^{\top }\left( x\right) \phi
\left( x,\lambda \right) d_{q}x,~~\lambda \in 
\mathbb{C}
,  \tag{3.1}
\end{equation}%
where $\phi \left( x,\lambda \right) $ is the solution defined above. Then $%
F\left( \lambda \right) $ is an entire function that can be reconstructed
using its values at the points $\left\{ \lambda _{n}\right\} _{n=-\infty
}^{\infty }$ by means of the sampling form%
\begin{equation}
F\left( \lambda \right) =\sum\limits_{n=-\infty }^{\infty }F\left( \lambda
_{n}\right) \frac{\omega \left( \lambda \right) }{\left( \lambda -\lambda
_{n}\right) \omega ^{\prime }\left( \lambda _{n}\right) },  \tag{3.2}
\end{equation}%
where $\omega \left( \lambda \right) $ is defined in (2.22). The series
(3.2) converges absolutely on $%
\mathbb{C}
$ and uniformly on compact subsets of $%
\mathbb{C}
.$
\end{theorem}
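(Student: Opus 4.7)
The plan is to carry out the standard Kramer-type argument: expand $f$ in the complete orthogonal basis $\{\phi(\cdot,\lambda_n)\}_{n\in\mathbb{Z}}$ of $H_q$ that is quoted at the end of Section~2, and then identify the resulting reproducing kernel via the Green-type identity (2.18). Analyticity of $F$ is immediate: since $\phi(x,\lambda)$ is entire in $\lambda$ for each $x\in[0,a]$ and uniformly bounded on $[0,a]\times K$ for every compact $K\subset\mathbb{C}$, and the $q$-integral in (3.1) is an absolutely convergent Jackson series on $\{aq^n\}_{n\geq 0}$, $F(\lambda)$ is entire by a Weierstrass argument.

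For the series itself, completeness and orthogonality give $f=\sum_{n}F(\lambda_n)\,\phi(\cdot,\lambda_n)/\|\phi(\cdot,\lambda_n)\|^2_{H_q}$ in $H_q$; note that $\langle f,\phi(\cdot,\lambda_n)\rangle_{H_q}=F(\lambda_n)$ because the eigenvalues are real and the initial data (2.21) are real, so $\phi(\cdot,\lambda_n)$ is real-valued. Interchanging sum and integral in (3.1), justified by $\phi(\cdot,\lambda)\in H_q$ together with Cauchy--Schwarz, reduces the theorem to evaluating $K_n(\lambda):=\int_0^a\phi^\top(x,\lambda)\phi(x,\lambda_n)\,d_qx$. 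To compute $K_n$, I apply (2.18) with $\lambda_1=\lambda$, $\lambda_2=\lambda_n$ and $q$-integrate from $0$ to $a$ using (2.6): the right-hand side becomes $(\lambda-\lambda_n)K_n(\lambda)$, while the left-hand side collapses to the Wronskian boundary evaluation $W(\phi(\cdot,\lambda),\phi(\cdot,\lambda_n))(a)-W(\phi(\cdot,\lambda),\phi(\cdot,\lambda_n))(0)$. The term at $x=0$ vanishes because both solutions share the initial data (2.21). At $x=a$, the boundary condition (1.3) on $\phi(\cdot,\lambda_n)$ lets me eliminate $\phi_2(aq^{-1},\lambda_n)$ in favour of $\phi_1(a,\lambda_n)$ (when $k_{22}\neq 0$; otherwise exchange the roles of $k_{21}$ and $k_{22}$), thereby factoring out exactly $\omega(\lambda)$ from (2.22). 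The outcome is $K_n(\lambda)=C_n\,\omega(\lambda)/(\lambda-\lambda_n)$ with $C_n=-\phi_1(a,\lambda_n)/k_{22}$; passing to the limit $\lambda\to\lambda_n$ (legitimate because $\omega$ is entire with $\omega(\lambda_n)=0$) gives $\|\phi(\cdot,\lambda_n)\|^2_{H_q}=C_n\,\omega'(\lambda_n)$. Substituting both relations into the series, the factor $C_n$ cancels and exactly (3.2) emerges.

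Absolute and uniform convergence then follow by Cauchy--Schwarz: the series in (3.2) is bounded by $\bigl(\sum_n|F(\lambda_n)|^2/\|\phi(\cdot,\lambda_n)\|^2\bigr)^{1/2}\bigl(\sum_n|K_n(\lambda)|^2/\|\phi(\cdot,\lambda_n)\|^2\bigr)^{1/2}$, and by Bessel's inequality applied respectively to $f$ and to $\phi(\cdot,\lambda)\in H_q$, each factor is at most $\|f\|_{H_q}$ and $\|\phi(\cdot,\lambda)\|_{H_q}$ respectively. Uniform boundedness of $\phi$ on compact $\lambda$-sets then makes the second factor locally bounded in $\lambda$, yielding the asserted uniform convergence on compact subsets of $\mathbb{C}$. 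The main obstacle I anticipate is the kernel computation: the $q$-shifts $xq^{-1}$ appearing in the Wronskian (2.19) and in the boundary condition (1.3) must be matched carefully against (2.18) after $q$-integration, and recovering the factor $\omega'(\lambda_n)$ in the denominator hinges on the fact that $\omega$ is entire in $\lambda$ so that the Taylor expansion of $\omega$ at $\lambda_n$ is available.
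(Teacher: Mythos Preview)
Your proposal is correct and follows essentially the same route as the paper's own proof: expand in the orthogonal eigenbasis, use Parseval/Cauchy--Schwarz to reduce to computing $K_n(\lambda)=\int_0^a\phi^\top(x,\lambda)\phi(x,\lambda_n)\,d_qx$, evaluate this via the integrated Green identity (2.18) so that the Wronskian at $x=0$ vanishes by the shared initial data (2.21) and the contribution at $x=a$ factors as $-\phi_{n,1}(a)\omega(\lambda)/k_{22}$ (or the analogous expression with $k_{21}$), then recover $\|\phi(\cdot,\lambda_n)\|^2$ by letting $\lambda\to\lambda_n$ so that the unwanted constant cancels. Your treatment of convergence via Cauchy--Schwarz and the uniform bound on $\phi(\cdot,\lambda)$ over compact $\lambda$-sets also matches the paper; the only cosmetic difference is that you argue entireness of $F$ directly at the outset, whereas the paper infers it at the end from the locally uniform convergence of the series of entire terms.
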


\begin{proof}
Since $\phi \left( x,\lambda \right) $ is in $H_{q}$ for any $\lambda ,$ we
have%
\begin{equation}
\phi \left( x,\lambda \right) =\sum\limits_{n=-\infty }^{\infty }\widehat{%
\phi }_{n}\frac{\phi _{n}\left( x\right) }{\left\Vert \phi _{n}\right\Vert
_{H_{q}}^{2}},  \tag{3.3}
\end{equation}%
where%
\begin{equation}
\begin{array}{l}
\widehat{\phi }_{n}=\int\limits_{0}^{a}\phi ^{\top }\left( x,\lambda \right)
\phi _{n}\left( x\right) d_{q}x \\ 
\text{ \ \ }=\int\limits_{0}^{a}\left\{ \phi _{1}\left( x,\lambda \right)
\phi _{n,1}\left( x\right) +\phi _{2}\left( x,\lambda \right) \phi
_{n,2}\left( x\right) \right\} d_{q}x,%
\end{array}
\tag{3.4}
\end{equation}%
$\phi ^{\top }\left( x,\lambda \right) =\left( \phi _{1}\left( x,\lambda
\right) ,~\phi _{2}\left( x,\lambda \right) \right) $ and $\phi _{n}^{\top
}\left( x\right) =\left( \phi _{n,1}\left( x\right) ,~\phi _{n,2}\left(
x\right) \right) $ is the vector-valued eigenfunction corresponding to the
eigenvalue $\lambda _{n}.$

Since $f$ is in $H_{q}$, it has the Fourier expansion%
\begin{equation}
f\left( x\right) =\sum\limits_{n=-\infty }^{\infty }\widehat{f}_{n}\frac{%
\phi _{n}\left( x\right) }{\left\Vert \phi _{n}\right\Vert _{H_{q}}^{2}}, 
\tag{3.5}
\end{equation}%
where%
\begin{equation}
\begin{array}{l}
\widehat{f}_{n}=\int\limits_{0}^{a}f^{\top }\left( x\right) \phi _{n}\left(
x\right) d_{q}x \\ 
\text{ \ \ }=\int\limits_{0}^{a}\left\{ f_{1}\left( x\right) \phi
_{n,1}\left( x\right) +f_{2}\left( x\right) \phi _{n,2}\left( x\right)
\right\} d_{q}x.%
\end{array}
\tag{3.6}
\end{equation}%
In view of Parseval's relation and definition (3.1), we obtain%
\begin{equation}
F\left( \lambda \right) =\sum\limits_{n=-\infty }^{\infty }F\left( \lambda
_{n}\right) \frac{\widehat{\phi }_{n}}{\left\Vert \phi _{n}\right\Vert
_{H_{q}}^{2}}.  \tag{3.7}
\end{equation}%
Let $\lambda \in 
\mathbb{C}
,$ $\lambda \neq \lambda _{n}$ and $n\in 
\mathbb{N}
$ be fixed. From relation (2.18), with $y_{11}\left( x\right) =\phi
_{1}\left( x,\lambda \right) ,y_{12}\left( x\right) =\phi _{2}\left(
x,\lambda \right) ~$and $y_{21}\left( x\right) =\phi _{n,1}\left( x\right) ,$
$y_{22}\left( x\right) =\phi _{n,2}\left( x\right) ,$ we obtain%
\begin{equation}
\begin{array}{l}
\left( \lambda -\lambda _{n}\right) \int\limits_{0}^{a}\left\{ \phi
_{1}\left( x,\lambda \right) \phi _{n,1}\left( x\right) +\phi _{2}\left(
x,\lambda \right) \phi _{n,2}\left( x\right) \right\} d_{q}x \\ 
=\left. W\left( \phi \left( .,\lambda \right) ,\phi _{n}\left( .\right)
\right) \right\vert _{x=a}-\left. W\left( \phi \left( .,\lambda \right)
,\phi _{n}\left( .\right) \right) \right\vert _{x=0}.%
\end{array}
\tag{3.8}
\end{equation}%
From (2.19) and the definition of $\phi \left( .,\lambda \right) ,$ we have%
\begin{equation}
\begin{array}{l}
\left( \lambda -\lambda _{n}\right) \int\limits_{0}^{a}\left\{ \phi
_{1}\left( x,\lambda \right) \phi _{n,1}\left( x\right) +\phi _{2}\left(
x,\lambda \right) \phi _{n,2}\left( x\right) \right\} d_{q}x \\ 
=\phi _{1}\left( a,\lambda \right) \phi _{n,2}\left( aq^{-1}\right) -\phi
_{n,1}\left( a\right) \phi _{2}\left( aq^{-1},\lambda \right) .%
\end{array}
\tag{3.9}
\end{equation}%
Assume that $k_{22}\neq 0.$ Since $\phi _{n}\left( .\right) $ is an
eigenfunction, then it satisfies (1.3). Hence%
\begin{equation}
\phi _{n,2}\left( aq^{-1}\right) =-\frac{k_{21}}{k_{22}}\phi _{n,1}\left(
a\right) .  \tag{3.10}
\end{equation}%
Substituting from (3.10) in (3.9), we obtain%
\begin{equation}
\begin{array}{l}
\left( \lambda -\lambda _{n}\right) \int\limits_{0}^{a}\left\{ \phi
_{1}\left( x,\lambda \right) \phi _{n,1}\left( x\right) +\phi _{2}\left(
x,\lambda \right) \phi _{n,2}\left( x\right) \right\} d_{q}x \\ 
=-\phi _{n,1}\left( a\right) \left\{ \dfrac{k_{21}}{k_{22}}\phi _{1}\left(
a,\lambda \right) +\phi _{2}\left( aq^{-1},\lambda \right) \right\} \\ 
=-\dfrac{\omega \left( \lambda \right) \phi _{n,1}\left( a\right) }{k_{22}}%
\end{array}
\tag{3.11}
\end{equation}%
provided that $k_{22}\neq 0.$ Similarly, we can show that

\begin{equation}
\begin{array}{l}
\left( \lambda -\lambda _{n}\right) \int\limits_{0}^{a}\left\{ \phi
_{1}\left( x,\lambda \right) \phi _{n,1}\left( x\right) +\phi _{2}\left(
x,\lambda \right) \phi _{n,2}\left( x\right) \right\} d_{q}x \\ 
=-\dfrac{\omega \left( \lambda \right) \phi _{n,2}\left( aq^{-1}\right) }{%
k_{21}}%
\end{array}
\tag{3.12}
\end{equation}%
provided that $k_{21}\neq 0.$ Differentiating with respect to $\lambda $ and
taking the limit as $\lambda \rightarrow \lambda _{n},$ we obtain%
\begin{eqnarray}
\left\Vert \phi _{n}\right\Vert _{H_{q}}^{2} &=&\int\limits_{0}^{a}\phi
_{n}^{\top }\left( x\right) \phi _{n}\left( x\right) d_{q}x  \notag \\
&=&-\dfrac{\omega ^{\prime }\left( \lambda _{n}\right) \phi _{n,1}\left(
a\right) }{k_{22}},  \TCItag{3.13} \\
&=&-\dfrac{\omega ^{\prime }\left( \lambda _{n}\right) \phi _{n,2}\left(
aq^{-1}\right) }{k_{21}}.  \TCItag{3.14}
\end{eqnarray}%
From (3.4), (3.11) and (3.13), we have for $k_{22}\neq 0,$%
\begin{equation}
\frac{\widehat{\phi }_{n}}{\left\Vert \phi _{n}\right\Vert _{H_{q}}^{2}}=%
\frac{\omega \left( \lambda \right) }{\left( \lambda -\lambda _{n}\right)
\omega ^{\prime }\left( \lambda _{n}\right) },  \tag{3.15}
\end{equation}%
and if $k_{21}\neq 0,$ we use (3.4), (3.12) and (3.14) to obtain the same
result. Therefore from (3.7) and (3.15) we get (3.2) when $\lambda $ is not
an eigenvalue. Now we investigate the convergence of (3.2). Using
Cauchy-Schwarz inequality for $\lambda \in 
\mathbb{C}
.$%
\begin{equation}
\begin{array}{l}
\sum\limits_{k=-\infty }^{\infty }\left\vert F\left( \lambda _{k}\right) 
\dfrac{\omega \left( \lambda \right) }{\left( \lambda -\lambda _{k}\right)
\omega ^{\prime }\left( \lambda _{k}\right) }\right\vert
=\sum\limits_{k=-\infty }^{\infty }\left\vert \widehat{f}_{k}\dfrac{\widehat{%
\phi }_{k}}{\left\Vert \phi _{k}\right\Vert _{H_{q}}^{2}}\right\vert \\ 
\leq \left( \sum\limits_{k=-\infty }^{\infty }\left\vert \dfrac{\widehat{f}%
_{k}}{\left\Vert \phi _{k}\right\Vert _{H_{q}}}\right\vert ^{2}\right)
^{1\backslash 2}\left( \sum\limits_{k=-\infty }^{\infty }\left\vert \dfrac{%
\widehat{\phi }_{k}}{\left\Vert \phi _{k}\right\Vert _{H_{q}}}\right\vert
^{2}\right) ^{1\backslash 2}<\infty ,%
\end{array}
\tag{3.16}
\end{equation}%
since $f\left( .\right) ,~\phi \left( .,\lambda \right) \in H_{q},$ then the
two series in the right-hand side of (3.16) converge. Thus series (3.2)
converge absolutely on $%
\mathbb{C}
.$ As for uniform convergence on compact subsets of $%
\mathbb{C}
,$ let $\Omega _{M}:=\left\{ \lambda \in 
\mathbb{C}
,\text{ }\left\vert \lambda \right\vert \leq M\right\} $ $M$ is a fixed
positive number. Let $\lambda \in \Omega _{M}$ and $N>0.$ Define $\Gamma
_{N}\left( \lambda \right) $ to be%
\begin{equation}
\Gamma _{N}\left( \lambda \right) =\left\vert F\left( \lambda \right)
-\sum\limits_{k=-N}^{N}F\left( \lambda _{k}\right) \dfrac{\omega \left(
\lambda \right) }{\left( \lambda -\lambda _{k}\right) \omega ^{\prime
}\left( \lambda _{k}\right) }\right\vert .  \tag{3.17}
\end{equation}%
By Cauchy-Schwarz inequality%
\begin{equation*}
\Gamma _{N}\left( \lambda \right) \leq \left\Vert \phi \left( .,\lambda
\right) \right\Vert _{H_{q}}\left( \sum\limits_{k=-N}^{N}\dfrac{\left\vert 
\widehat{f}_{k}\right\vert ^{2}}{\left\Vert \phi _{k}\right\Vert _{H_{q}}^{2}%
}\right) ^{1\backslash 2}.
\end{equation*}%
Since the function $\phi \left( .,\lambda \right) $ is uniformly bounded on
the subsets of $%
\mathbb{C}
,$ we can find a positive constant $C_{\Omega }$ which is independent of $%
\lambda $ such that $\left\Vert \phi \left( .,\lambda \right) \right\Vert
_{H_{q}}\leq C_{\Omega },$ $\lambda \in \Omega _{M}.$ Thus%
\begin{equation*}
\Gamma _{N}\left( \lambda \right) \leq C_{\Omega }\left(
\sum\limits_{k=-N}^{N}\dfrac{\left\vert \widehat{f}_{k}\right\vert ^{2}}{%
\left\Vert \phi _{k}\right\Vert _{H_{q}}^{2}}\right) ^{1\backslash
2}\rightarrow 0\text{ as }N\rightarrow \infty .
\end{equation*}%
Hence (3.2) converges uniformly on compact subsets of $%
\mathbb{C}
.$ Thus $F\left( \lambda \right) $ is an entire function and the proof is
complete.
\end{proof}

\section{Examples}

In this section we give three examples illustrating the sampling theorem of
the previous section.

\textbf{Example 1.} Consider $q-$Dirac system (1.1)-(1.3) in which $p\left(
x\right) =0=r\left( x\right) :$%
\begin{equation}
\left\{ 
\begin{array}{l}
-\dfrac{1}{q}D_{q^{-1}}y_{2}=\lambda y_{1}, \\ 
D_{q}y_{1}=\lambda y_{2},%
\end{array}%
\right.  \tag{4.1}
\end{equation}%
\begin{equation}
y_{1}\left( 0\right) =0,  \tag{4.2}
\end{equation}%
\begin{equation}
y_{2}\left( \pi q^{-1}\right) =0.  \tag{4.3}
\end{equation}%
It is easy to see that a solution (4.1) and (4.2) is given by 
\begin{equation*}
\phi ^{\top }\left( x,\lambda \right) =\left( \sin \left( \lambda x;q\right)
,~\cos \left( \lambda \sqrt{q}x;q\right) \right) .
\end{equation*}
By substituting this solution in (4.3), we obtain $\omega \left( \lambda
\right) =\cos \left( \lambda q^{-1\backslash 2}\pi ;q\right) ,$ hence, the
eigenvalues are $\lambda _{n}=\dfrac{q^{1-n+\varepsilon _{n}\left(
1\backslash 2\right) }}{\left( 1-q\right) \pi }.$ Applying Theorem 1, the $%
q- $transforms%
\begin{eqnarray}
F\left( \lambda \right) &=&\int\limits_{0}^{\pi }f^{\top }\left( x\right)
\phi \left( x,\lambda \right) d_{q}x  \notag \\
&=&\int\limits_{0}^{\pi }\left\{ f_{1}\left( x\right) \sin \left( \lambda
x;q\right) +f_{2}\left( x\right) \cos \left( \lambda \sqrt{q}x;q\right)
\right\} d_{q}x,  \TCItag{4.4}
\end{eqnarray}%
for some $f_{1}$ and $f_{2}\in L_{q}^{2}\left( 0,\pi \right) ,$ then it has
the sampling formula%
\begin{equation}
F\left( \lambda \right) =\sum\limits_{n=-\infty }^{\infty }F\left( \lambda
_{n}\right) \frac{\cos \left( \lambda q^{-1\backslash 2}\pi ;q\right) }{%
\left( \lambda -\lambda _{n}\right) \omega ^{\prime }\left( \lambda
_{n}\right) }.  \tag{4.5}
\end{equation}

\textbf{Example 2.} Consider $q-$Dirac equation (4.1) together with the
following boundary conditions%
\begin{equation}
y_{2}\left( 0\right) =0,  \tag{4.6}
\end{equation}%
\begin{equation}
y_{1}\left( \pi \right) =0.  \tag{4.7}
\end{equation}%
In this case $\phi ^{\top }\left( x,\lambda \right) =\left( \cos \left(
\lambda x;q\right) ,~-\sqrt{q}\sin \left( \lambda \sqrt{q}x;q\right) \right)
.$ Since $\omega \left( \lambda \right) =\cos \left( \lambda \pi ;q\right) ,$
then the eigenvalues are given by $\lambda _{n}=\dfrac{q^{-n+1\backslash
2+\varepsilon _{n}\left( 1\backslash 2\right) }}{\left( 1-q\right) \pi }.$
Applying Theorem 1 above to the $q-$transform%
\begin{equation}
F\left( \lambda \right) =\int\limits_{0}^{\pi }\left\{ f_{1}\left( x\right)
\cos \left( \lambda x;q\right) -f_{2}\left( x\right) \sqrt{q}\sin \left(
\lambda \sqrt{q}x;q\right) \right\} d_{q}x,  \tag{4.8}
\end{equation}%
for some $f_{1}$ and $f_{2}\in L_{q}^{2}\left( 0,\pi \right) ,$ then we
obtain%
\begin{equation}
F\left( \lambda \right) =\sum\limits_{n=-\infty }^{\infty }F\left( \lambda
_{n}\right) \frac{\cos \left( \lambda \pi ;q\right) }{\left( \lambda
-\lambda _{n}\right) \omega ^{\prime }\left( \lambda _{n}\right) }. 
\tag{4.9}
\end{equation}

\textbf{Example 3. }Consider $q-$Dirac equation (4.1) together with the
following boundary conditions%
\begin{equation}
y_{1}\left( 0\right) +y_{2}\left( 0\right) =0,  \tag{4.10}
\end{equation}%
\begin{equation}
y_{2}\left( \pi q^{-1}\right) =0.  \tag{4.11}
\end{equation}%
In this case 
\begin{equation*}
\phi ^{\top }\left( x,\lambda \right) =\left( \cos \left( \lambda x;q\right)
-\sin \left( \lambda x;q\right) ,~-\sqrt{q}\sin \left( \lambda \sqrt{q}%
x;q\right) -\cos \left( \lambda \sqrt{q}x;q\right) \right) .
\end{equation*}
Since $\omega \left( \lambda \right) =-\sqrt{q}\sin \left( \lambda
q^{-1\backslash 2}\pi ;q\right) -\cos \left( \lambda q^{-1\backslash 2}\pi
;q\right) ,$ then the eigenvalues of this problem are the solutions of
equation%
\begin{equation}
\sqrt{q}\sin \left( \lambda q^{-1\backslash 2}\pi ;q\right) =-\cos \left(
\lambda q^{-1\backslash 2}\pi ;q\right) .  \tag{4.12}
\end{equation}
Applying Theorem 1 above to the $q-$transform%
\begin{eqnarray}
F\left( \lambda \right) &=&\int\limits_{0}^{\pi }\left\{ f_{1}\left(
x\right) \left( \cos \left( \lambda x;q\right) -\sin \left( \lambda
x;q\right) \right) \right.  \notag \\
&&\left. -f_{2}\left( x\right) \left( \sqrt{q}\sin \left( \lambda \sqrt{q}%
x;q\right) +\cos \left( \lambda \sqrt{q}x;q\right) \right) \right\} d_{q}x, 
\TCItag{4.13}
\end{eqnarray}%
for some $f_{1}$ and $f_{2}\in L_{q}^{2}\left( 0,\pi \right) ,$ then we
obtain%
\begin{equation}
F\left( \lambda \right) =\sum\limits_{n=-\infty }^{\infty }F\left( \lambda
_{n}\right) \frac{-\sqrt{q}\sin \left( \lambda q^{-1\backslash 2}\pi
;q\right) -\cos \left( \lambda q^{-1\backslash 2}\pi ;q\right) }{\left(
\lambda -\lambda _{n}\right) \omega ^{\prime }\left( \lambda _{n}\right) }. 
\tag{4.14}
\end{equation}

\end{document}